\def\undersetbrace#1\to#2{\underbrace{#2}_{#1}}
\def\oversetbrace#1\to#2{\overbrace{#2}^{#1}}
\def\AMSunderset#1\to#2{\underset{#1}{#2}}
\def\AMSoverset#1\to#2{\overset{#1}{#2}}
\newcommand{\nmb}[2]{\ifx!#1{\ref{nmb:#2}}%
\else\if.#1{\label{nmb:#2}}%
\else\if0#1{\label{nmb:#2}}%
\else{{#2}}%
\fi\fi\fi}
\newtheorem*{proposition*}{Proposition}
\newtheorem{theorem}[subsection]{Theorem}
\newtheorem*{theorem*}{Theorem}
\newtheorem{lemma}[subsection]{Lemma}
\newtheorem*{lemma*}{Lemma}
\newtheorem*{corollary*}{Corollary}
\newenvironment{demo}[1]{\par\smallskip\noindent{\bf #1.}}{\par\smallskip}
\def\ign#1{}             %=ignore, invisible entry for the index only
\def\o{\on{\circ}}
\def\X{\mathfrak X}
\def\be{\beta}
\def\de{\delta}
\def\et{\eta}
\def\ka{\kappa}
\def\la{\lambda}
\def\si{\sigma}
\def\ph{\varphi}
\def\ps{\psi}
\def\om{\omega}
\def\Om{\Omega}
\def\i{^{-1}}
\def\x{\times}
\def\p{\partial}
\def\g{\mathfrak g}
\let\on=\operatorname
\def\L{\mathcal L}
\def\Ad{\on{Ad}}
\def\ad{\on{ad}}
\def\pr{\on{pr}}
\def\Id{\on{Id}}
\def\Evol{\on{Evol}}
\def\evol{\on{evol}}
\def\AMSonly#1{}
\def\var{\quad} % for variables
\begin{document}%\topmatter
\title[]
{Geometry of infinite dimensional Cartan Developments 
}
\author{Johanna Michor and Peter W. Michor}
\address{
Peter W. Michor:
Fakult\"at f\"ur Mathematik, Universit\"at Wien,
Oskar-Morgenstern-Platz 1, A-1090 Wien, Austria.
}
\email{Peter.Michor@univie.ac.at}
\address{Johanna Michor: %Lamberggasse 11, A-3400 Klosterneuburg, Austria
Fakult\"at f\"ur Mathematik, Universit\"at Wien,
Oskar-Morgenstern-Platz 1, A-1090 Wien, Austria.
}
\email{Johanna Michor@univie.ac.at}
\thanks{Johanna Michor was supported by FWF-Project P31651}
%\date{\today}

\keywords{Cartan development, regular infinite dimensionsl Lie group}
\subjclass[2020]{Primary 58B25}
\begin{abstract} 
The Cartan development takes a Lie algebra valued 1-form satisfying the Maurer-Cartan equation on a simply connected manifold $M$ to a smooth mapping from $M$ into the Lie group. 
In this paper this is generalized to infinite dimensional $M$ for infinite dimensional regular Lie groups. 
The Cartan development is viewed as a generalization of the evolution map of a regular Lie group.
The tangent mapping of a Cartan development is identified as another Cartan development.
\end{abstract}
\def\LaTeXonly{}%\endtopmatter

\maketitle
%\tableofcontents

\section{\label{nmb:1}Introduction}
A regular Lie group $G$ is one where where one can integrate smooth curves in the Lie algebra $\g$ to smooth curves in the Lie group in a smooth way: The evolution mapping $\Evol: C^{\infty}(\mathbb R,\g) \to C^{\infty}(\mathbb R,G)$ exists and is smooth; see \ref{nmb:2.4} below. This notion is relevant for infinite dimensional Lie groups where existence results for ODEs are not available in general. 
A stronger version of this notion is due to Omori et al. \cite{OmoriMaedaYoshioka80,OmoriMaedaYoshioka81,OmoriMaedaYoshioka81b,OmoriMaedaYoshioka82,OmoriMaedaYoshioka83,OmoriMaedaYoshiokaKobayashi83} and was weakened to this version for Fr\'echet Lie groups by Milnor \cite{Milnor84}. It was generalized to Lie groups modeled on convenient vector spaces (i.e., locally convex vector spaces where each Mackey Cauchy sequence converges) in \cite{KM97r}, see also \cite{KM97}. Up to now, no example of a non-regular Lie group modelled on convenient vector spaces is known. In section \ref{nmb:2} we review regular Lie groups in order to fix notation and to make clear the analogy to the Cartan-development.

One can extend the notion of regularity to other classes of curves for certain Lie groups (e.g., modeled on Banach spaces) and ask for 
the existence of $\Evol$ on $C^{k}(\mathbb R,\g)$ for $k=0,1,2,\dots$ or even $L^1(\mathbb R,\g)$. Results in these directions are due to Gl\"ockner in \cite{Glockner12,Glockner15} and Hanusch in  \cite{Hanusch22,Hanusch20,Hanusch19}. 

In this paper we extend the domain of the evolution mapping: In order to end up in $C^{\infty}(M,G)$ for an  infinite dimensional manifold $M$ we consider the spaces $\Om^1_{\text{flat}}(M,\g)$ of flat $\g$-valued differential 1-forms on $M$ in \ref{nmb:3.1} and show that the evolution map exists and is smooth for simply connected pointed $M$, and we give an explicit description of its tangent mapping.  This is an extension to infinite dimensions of the Cartan development.  

The results of this paper could be applied in situations where a diffeomorphism group acts on a space of shapes; the diffeomorphism mapping a template shape to any shape could be interpreted as a Cartan-development. See also \cite{DiezFutakiRatiu24}. 

Infinite dimensional manifolds Lie groups are always modeled on convenient vector spaces as treated in \cite{KM97}, see also \cite{Michor20} or \cite{Wiki:convenient}

This paper was inspired by the paper \cite{FokasGelfand96} on integrable surfaces which was pointed out to the authors by 
Oleksandr Sakhnovich. In  \cite{FokasGelfand96} the term `integrable' just means that the 1-forms considered there satisfy the Maurer-Cartan formula, and it has nothing to do with complete integrability: There is no Hamitonian aspect to this, as was pointed out to us by Boris Khesin. 

\section{\label{nmb:2}Review of regular Lie groups}

\subsection{\label{nmb:2.1}Notation on Lie groups}
Let $G$ be a Lie group which may be infinite dimensional with Lie
algebra $\g$.  Later we will only consider regular Lie groups, see \ref{nmb:2.4}.
Let $\mu:G\x G\to G$ be the multiplication, let $\mu_x$ be left  
translation and $\mu^y$ be right translation, 
given by $\mu_x(y)=\mu^y(x)=xy=\mu(x,y)$. We denote inversion by $\nu:G\to G$, $\nu(x)=x\i$. 
The tangent mapping 
$T_{(a,b)}\mu:T_aG\x T_bG \to T_{ab}G$ is given by 
$$T_{(a,b)}\mu.(X_a,Y_b) = T_a(\mu^b).X_a + T_b(\mu_a).Y_b$$
and $T_a\nu:T_aG\to T_{a\i}G$ is given by
$$T_a\nu = - T_e(\mu^{a\i}).T_a(\mu_{a\i}) 
= - T_e(\mu_{a\i}).T_a(\mu^{a\i}).$$

Let $L,R:\g\to \X(G)$ be the left 
and right invariant vector field mappings, given by 
$L_X(g)=T_e(\mu_g).X$ and $R_X=T_e(\mu^g).X$, respectively. 
They are related by $L_X(g)=R_{\Ad(g)X}(g)$.
Their flows are given by 
\begin{displaymath}
\on{Fl}^{L_X}_t(g)= g.\exp(tX)=\mu^{\exp(tX)}(g),\quad
\on{Fl}^{R_X}_t(g)= \exp(tX).g=\mu_{\exp(tX)}(g).\end{displaymath}

We also need the right  
Maurer-Cartan form $\ka=\ka^r\in\Om^1(G,\g)$, given by  
$\ka_x(\xi):=T_x(\mu^{x\i})\cdot \xi$. It satisfies the left 
Maurer-Cartan equation $d\ka-\tfrac12[\ka,\ka]_\wedge=0$ which follows by evaluating
 $d\ka^r$ on right invariant vector fields $R_X,R_Y$ for $X,Y\in\g$
\begin{align*}
(d\ka^r)(R_X,R_Y) &= R_X(\ka^r(R_Y)) - R_Y(\ka^r(R_X)) - \ka^r([R_X,R_Y])
\\&
= R_X(Y) - R_Y(X) + [X,Y] = 0-0 +[\ka^r(R_X),\ka^r(R_Y)]\,,
\end{align*}
where
$[\;,\;]_\wedge$ denotes the wedge product of $\g$-valued forms on
$G$ induced by the Lie bracket. Note that
$\tfrac12[\ka,\ka]_\wedge (\xi,\et) = [\ka(\xi),\ka(\et)]$.

Similarly the 
{\it left Maurer-Cartan form} $\ka^l\in\Om^1(G,\g)$ is 
given by $\ka^l_g=T_g(\mu_{g\i}):T_gG\to \g$ and it   
satisfies the right Maurer-Cartan equation 
  $d\ka^l+\tfrac12[\ka^l,\ka^l]^\wedge_\g=0$. 
  We have also $(\nu^*\ka^r)_g = -\ka^l_{g}= -Ad(g\i)\ka^r_g$.

The (exterior) derivative of the function $\Ad:G\to GL(\g)$ can be 
expressed by
\begin{displaymath}
d\Ad = \Ad.(\ad\o\ka^l) = (\ad\o \ka^r).\Ad,\end{displaymath}
since , for $c:\mathbb R\to G$ smooth with $c(0)=e$ and $c'(0)=X$ like $\exp(tX)$ if $\exp$ exists,
$$d\Ad(T\mu_g.X) = \p_t|_0 \Ad(g.c(t)) = \Ad(g)\p_t|_0\Ad(c(t)) 
= \Ad(g).\ad(\ka^l(T\mu_g.X))\,.$$
Since we shall need it we also note 
\begin{equation*}
d(\Ad\o \nu) = -(\ad\o\ka^l)(\Ad\o\nu) = -(\Ad\o\nu)(\ad\o \ka^r).
\end{equation*}

\subsection{\label{nmb:2.2}The right and left logarithmic derivatives}
Let $M$
be a manifold and let $f:M\to G$ be a smooth mapping into a Lie
group $G$ with Lie algebra $\mathfrak g$. We define the mapping 
$\de^r f:TM\to \mathfrak g$ by the formula 
\begin{align*}
\de^r f(\xi_x) :&= T_{f(x)}(\mu^{f(x)\i}).T_xf.\xi_x = \ka^r_{f(x)}(T_xf.\xi_x)
\\&=(f^*\ka^r)(\xi_x)
\text{ for }\xi_x\in T_xM.
\end{align*}
Then $\de^r f$ is a $\mathfrak g$-valued 1-form on $M$, $\de^r f\in
\Om^1(M;\mathfrak g)$. We call $\de^r f$ the
{\it right logarithmic derivative}%
\index{right logarithmic derivative} of $f$, since for $f:\mathbb
R\to(\mathbb R^+,\cdot)$ we have 
$\de^r f(x).1=\frac{f'(x)}{f(x)}=(\log\o f)'(x)$.

Similarly the
{\it left logarithmic derivative}%
\index{left logarithmic derivative} 
$\de^lf\in\Om^1(M,\mathfrak g)$ 
of a smooth mapping $f:M\to G$ is given by 
$$\de^lf.\xi_x= T_{f(x)}(\mu_{f(x)\i}).T_xf.\xi_x= (f^*\ka^l)(\xi_x)$$

\begin{theorem}{\label{nmb:2.3}}
Let $f,g:M\to G$ be smooth. Then the Leibniz rule 
holds:
$$\de^r(f.g)(x) 
     = \de^r f(x) + \Ad(f(x)).\de^r g(x).$$
Moreover, the differential form $\de^rf\in\Om^1(M;\g)$ 
satifies the `left Maurer-Cartan equation' (left because it stems 
from the left action of $G$ on itself) 
\begin{gather*}
d\de^rf(\xi,\et) - 
[\de^rf(\xi),\de^rf(\et)]^\g=0,\\
\text{ or }\quad d\de^rf - \frac12 [\de^rf,\de^rf]^\g_\wedge=0,
\end{gather*}
where $\xi,\et\in T_x M$, and 
where for $\ph\in\Om^p(M;\g),\ps\in\Om^q(M;\g)$ one puts
$$[\ph,\ps]^\g_\wedge(\xi_1,\dots,\xi_{p+q}) 
     := \frac1{p!q!}\sum_\si\on{sign}(\si)
     [\ph(\xi_{\si1},\dots),\ps(\xi_{\si(p+1)},\dots)]^\g.$$
If $h:N\to M$ is a smooth mapping, then $\de^r(f\o h)= h^*(\de^r(f))$.
If $\ph:G\to H$ is a smooth homomorphism of groups, then 
$\de^r_H(\ph\o f) = \ph'\o \de^r_G(f)$ where $\ph'=T_e\ph:\g \to \mathfrak h$.

The left logarithmic derivative 
the corresponding Leibniz rule is given by
\begin{equation*}
\de^l(fg)(x) = \de^lg(x) + 
Ad(g(x)\i)\de^lf(x)\,.
\end{equation*}
It satisfies the `right Maurer Cartan equation':
\begin{equation*}
d\de^lf + \frac12 
[\de^lf,\de^lf]^\g_\wedge=0.
\end{equation*}

\end{theorem}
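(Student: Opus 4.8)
The plan is to reduce all the assertions to facts already recorded in \ref{nmb:2.1}: the tangent maps of $\mu$ and $\nu$, the Maurer--Cartan equation $d\ka^r=\tfrac12[\ka^r,\ka^r]_\wedge$ for $\ka^r$ together with its counterpart $d\ka^l+\tfrac12[\ka^l,\ka^l]_\wedge=0$ for $\ka^l$, the identity $\nu^*\ka^r=-\ka^l$, and the naturality of $d$, of pullback of forms, and of the graded bracket $[\,\cdot\,,\cdot\,]^\g_\wedge$ of $\g$-valued forms. Since $\de^r f=f^*\ka^r$ and $\de^l f=f^*\ka^l$ by \ref{nmb:2.2}, most items are then immediate; the one genuine computation is the Leibniz rule, and there the only work is bookkeeping of left versus right translations and of base points.

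For the Leibniz rule for $\de^r$, write $F=f.g=\mu\o(f,g)$. For $\xi\in T_xM$ the tangent-map formula for $\mu$ gives
$$T_xF.\xi = T_{f(x)}(\mu^{g(x)}).T_xf.\xi + T_{g(x)}(\mu_{f(x)}).T_xg.\xi.$$
Since $F(x)\i=g(x)\i f(x)\i$ and $\mu^{ab}=\mu^b\o\mu^a$, we have $\ka^r_{F(x)}=T(\mu^{f(x)\i})\o T(\mu^{g(x)\i})$. Applied to the first summand, the composite $T(\mu^{g(x)\i})\o T(\mu^{g(x)})$ is the identity, leaving $T(\mu^{f(x)\i}).T_xf.\xi=\de^r f(\xi)$. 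Applied to the second summand, one commutes the right translation by $g(x)\i$ past the left translation by $f(x)$, recognises $T(\mu^{g(x)\i}).T_xg.\xi=\de^r g(\xi)$, and is left with $T_e(\mu^{f(x)\i}).T_{f(x)}(\mu_{f(x)}).\de^r g(\xi)$; since $\mu^{f(x)\i}\o\mu_{f(x)}$ is conjugation by $f(x)$, this equals $\Ad(f(x)).\de^r g(\xi)$. Adding the two summands yields the claimed rule.

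The left Maurer--Cartan equation follows because $f^*$ commutes with $d$ and with $[\,\cdot\,,\cdot\,]^\g_\wedge$, so $d\,\de^r f=f^*(d\ka^r)=\tfrac12 f^*[\ka^r,\ka^r]_\wedge=\tfrac12[\de^r f,\de^r f]^\g_\wedge$; likewise $\de^r(f\o h)=(f\o h)^*\ka^r=h^*\de^r f$ by functoriality of pullback, and, for a smooth homomorphism $\ph:G\to H$, differentiating $\ph\o\mu^{g\i}_G=\mu^{\ph(g)\i}_H\o\ph$ at the point $g\in G$ gives $\ph^*\ka^r_H=\ph'\o\ka^r_G$ with $\ph'=T_e\ph$, hence $\de^r_H(\ph\o f)=\ph'\o\de^r_G f$. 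For the left logarithmic derivative I would not repeat the computations but use $\nu^*\ka^r=-\ka^l$, which gives $\de^l f=-\de^r(\nu\o f)$: then pulling back $d\ka^l+\tfrac12[\ka^l,\ka^l]_\wedge=0$ yields the right Maurer--Cartan equation for $\de^l f$, and combining $\de^l f=-\de^r(\nu\o f)$ with the right Leibniz rule applied to $(\nu\o g).(\nu\o f)=\nu\o(fg)$ (using $(fg)\i=g\i f\i$) gives $\de^l(fg)=-\de^r(\nu\o g)-\Ad(g(x)\i).\de^r(\nu\o f)=\de^l g+\Ad(g(x)\i).\de^l f$.

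I do not expect a real obstacle: the argument is the classical finite-dimensional one. The only point that must be checked rather than taken for granted is that all the ingredients---smoothness of $\mu$ and $\nu$ with the stated tangent maps, naturality of $d$ and of pullback, and naturality of $[\,\cdot\,,\cdot\,]^\g_\wedge$ (which requires only a bounded bilinear Lie bracket on $\g$)---remain valid in convenient calculus for manifolds modelled on convenient vector spaces. Since no integration of ODEs and no regularity assumption on $G$ enters here, all of this is available in \cite{KM97}, so the finite-dimensional proof transfers verbatim.
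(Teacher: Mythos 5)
Your proof is correct and follows essentially the same route as the paper: the Leibniz rule via the tangent map of $\mu$ (splitting $T(\mu\o(f,g))$ into the two translation summands and recognising $\Ad(f(x))$), and the Maurer--Cartan equation by pulling back $d\ka^r=\tfrac12[\ka^r,\ka^r]_\wedge$ along $f$. The only variation is that you derive the left-logarithmic statements from the right ones via $\de^l f=-\de^r(\nu\o f)$ and $\nu\o(fg)=(\nu\o g).(\nu\o f)$, where the paper simply declares the left case ``analogous''; your reduction is a clean and valid substitute.
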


For `regular Lie groups' a converse to this statement holds; see
\cite{KM97},~40.2. We shall review this in \ref{nmb:2.3} and \ref{nmb:2.4} below.
This result has a geometric interpretation in 
principal bundle geometry for the trivial principal bundle 
$\operatorname{pr_1}:M\x G\to M$ with right principal action. Then 
the submanifolds $\{(x,f(x).g):x\in M\}$ for $g\in G$ form a 
foliation of $M\x G$ whose tangent distribution is complementary to the 
vertical bundle $M\x TG \subseteq T(M\x G)$ and is invariant under the 
principal right $G$-action. So it is the horizontal distribution of the
principal connection $\om^l\in\Om^1(M\x G,\g)$ which is given by 
$\om^l(\xi_x,\et_a) := -\de^l(f.a)(\xi_x) + \ka^l(\et_a)$. 
Thus this principal connection has 
vanishing curvature which translates into the result for the right
logarithmic derivative. 

\begin{demo}{Proof}
For the Leibniz rule we compute for $\xi_x\in T_xM$, using \ref{nmb:2.1},
\begin{align*}
\de^r(f.g)(\xi_x) &= T_{f(x).g(x)}(\mu^{(f(x).g(x))\i}).T(\mu\o(f,g)).\xi_x
\\&
= T_{f(x).g(x)}(\mu^{(f(x).g(x))\i}).T\mu.(T_xf.\xi_x,T_xg.\xi_x)
\\&
= T(\mu^{f(x)\i}).T(\mu^{g(x)\i}).
\big(T(\mu_{f(x)}).T_xg.\xi_x) + T(\mu^{g(x)}).T_xf.\xi_x\big)
\\&
= T(\mu^{f(x)\i}).T(\mu_{f(x)}).T(\mu^{g(x)\i}).T_xg.\xi_x) + T(\mu^{f(x)\i}).T_xf.\xi_x
\\&
= \Ad(f(x)).\de^r g(\xi_x) + \de^rf(\xi_x).
\end{align*}
The Maurer-Cartan equation for $\de^rf $ is the pullback of the one for  $\ka^r$:
\begin{align*}
d(\de^r f) &= d(f^*\ka^r) = f^*(d\ka^r) 
= f^*(\tfrac12[\ka^r,\ka^r]_\g^\wedge)
= \tfrac12[f^*\ka^r,f^*\ka^r]_\g^\wedge
\\&
= \tfrac12[\de^rf,\de^rf]_\g^\wedge
\end{align*}
Finally, 
$\de^r(f\o \ph) = d (f\o \ph)^* \ka^r = d \ph^* f^* \ka^r = \ph^* \de^r(f)$.
For the left logarithmic derivative the proof is analogous. 
\qed\end{demo}

\subsection{\label{nmb:2.4}The 1-dimensional evolution operator}
Let $G$ be a possibly infinite dimensional Lie group with Lie algebra $\g$. 
For a closed interval $I\subset \mathbb R$  and for 
$X\in C^\infty(I,\g)$ we consider the ordinary differential equation
\begin{equation*}
\begin{cases} g(t_0)=e \\
       \p_tg(t)=T_e(\mu^{g(t)})X(t) = R_{X(t)}(g(t))
          \quad\text{ or }\ka^r(\p_tg(t)) = X(t),
\end{cases}\tag{1}
\end{equation*}
for local smooth curves $g$ in $G$, where $t_0\in I$. Then the following results hold; see \cite{KM97},~40.2:

{\it 
\begin{enumerate}
\item[(2)] Local solution curves $g$ of the differential equation 
       \thetag1 are unique.
\item[(3)]  If for fixed $X$ the differential equation \thetag1 has a 
       local solution near each $t_0\in I$, then it has also a global 
       solution $g\in C^\infty(I,G)$.  
\item[(4)]  If for all $X\in C^\infty(I,\g)$ the differential equation 
       \thetag1 has a local solution near one fixed $t_0\in I$, then 
       it has also a global solution $g\in C^\infty(I,G)$ for each 
       $X$. Moreover, if the local solutions near $t_0$ depend 
       smoothly on the vector fields $X$ then so does the global solution. 
\item[(5)]  If \thetag{4} holds, then the curve $t\mapsto g(t)\i$ is the unique local smooth curve $h$ in 
       $G$ which satifies  
$$
\begin{cases} h(t_0)=e \\
\p_th(t) = T_e(\mu_{h(t)})(-X(t)) = L_{-X(t)}(h(t)) \\
          \quad\text{ or }\ka^l(\p_th(t)) = -X(t).
\end{cases}
$$
\end{enumerate}
}

\subsection{ \label{nmb:2.5}Regular Lie groups}
If for each $X\in C^\infty(\mathbb R,\g)$ there exists 
$g\in C^\infty(\mathbb R,G)$ satisfying 
\begin{equation*}
\begin{cases} g(0)=e, \\
       \p_tg(t)=T_e(\mu^{g(t)})X(t) = R_{X(t)}(g(t))\\
          \quad\text{ or }\ka^r(\p_tg(t))= \de^rg(\partial_t) = X(t),
\end{cases}\tag{1}
\end{equation*}
then we write 
\begin{gather*}
\on{evol}^r_G(X) = \on{evol}_G(X):=g(1),\\
\on{Evol}^r_G(X)(t) := \on{evol}_G(s\mapsto tX(ts)) = g(t),
\end{gather*} 
and call it the {\it right evolution}%
\index{right evolution} 
of the curve $X$ in $G$. By \ref{nmb:2.4} the 
solution of the differential equation \thetag1 is unique, 
and for global existence it is sufficient that it has a local solution somewhere.
Then 
$$
\on{Evol}^r_G: C^\infty(\mathbb R,\g) \to \{g\in C^\infty(\mathbb R,G):g(0)=e\}
$$
is bijective with inverse the right logarithmic derivative $\de^r$.

The Lie group $G$ is called a
{\it regular Lie group}%
\index{regular Lie group} if 
$\on{evol}^r:C^\infty(\mathbb R,\g)\to G$ exists and is smooth.

We also write 
\begin{gather*}
\on{evol}^l_G(X) = \on{evol}_G(X):=h(1),\\
\on{Evol}^l_G(X)(t) := \on{evol}^l_G(s\mapsto tX(ts)) = h(t),
\end{gather*} 
if $h$ is the (unique) solution of 
\begin{equation*}
\begin{cases} h(0)=e \\
     \p_th(t) = T_e(\mu_{h(t)})(X(t)) = L_{X(t)}(h(t)),\\
          \quad\text{ or }\ka^l(\p_th(t))=\de^lh(\partial_t) = X(t).
\end{cases}\tag{2}
\end{equation*}
Clearly $\on{evol}^l:C^\infty(\mathbb R,\g)\to G$ exists and is 
also smooth if $\on{evol}^r$ does, 
since we have $\on{evol}^l(X)=\on{evol}^r(-X)\i$ by \ref{nmb:2.4}. 

Let us collect some easily seen properties of the evolution mappings.
If $f\in C^\infty(\mathbb R,\mathbb R)$, then we have 
\begin{align*}
\on{Evol}^r(X)(f(t)) &= \on{Evol}^r(f'.(X\o f))(t).\on{Evol}^r(X)(f(0)),\\
\on{Evol}^l(X)(f(t)) &= \on{Evol}^l(X)(f(0)).\on{Evol}^l(f'.(X\o f))(t).
\end{align*}
If $\ph:G\to H$ is a smooth homomorphism between regular Lie groups 
then the diagram
$$\xymatrix{ 
C^\infty(\mathbb R,\mathfrak g)   \ar[r]^{\ph'_*} \ar[d]_{\on{evol}_G}  &  
     C^\infty(\mathbb R,\mathfrak h) \ar[d]^{\on{evol}_H}\\ 
G \ar[r]^{\ph} &   H}  
$$
commutes, since 
$\p_t\ph(g(t))=T\ph.T(\mu^{g(t)}).X(t)=T(\mu^{\ph(g(t))}).\ph'.X(t)$.

Note that each regular Lie group admits an exponential mapping, 
namely the restriction of $\on{evol}^r$ to the 
constant curves $\mathbb R\to \g$. A Lie group is regular if 
and only if its universal covering group is regular.

No counter-example to the following statement is known:
\emph {All known Lie groups modelled on convenient vector spaces are regular.}
Any Banach Lie group is regular since we may consider the time 
dependent right invariant vector field $R_{X(t)}$ on $G$ and its 
integral curve $g(t)$ starting at $e$, which exists and depends 
smoothly on (a further parameter in) $X$. 
For diffeomorphism groups the evolution operator 
is just integration of time dependent 
vector fields with compact support. 

%As noted in the introduction, one may ask for 
%the existence of $\Evol$ on $C^{k}(\mathbb R,\g)$ for $k=0,1,2,\dots$ or even $L^1(\mathbb R,\g)$. Results in these directions are due to Gl\"ockner in \cite{Glockner15} and Hanusch in  \cite{Hanusch22,Hanusch20,Hanusch19}. 

\section{The general evolution operator alias the Cartan development}\label{nmb:3}

\subsection{The space of flat differential forms}\label{nmb:3.1}
Let $G$ be a regular Lie group with Lie algebra $\g$. Let $(M,x_0)$ be a simply connected pointed possibly infinite dimensional manifold. We consider the space 
\begin{align*}
\Om^1_{\text{flat}}(M,\g) & = \{\xi \in \Om^1(M,\g): d\xi - \tfrac12 [\xi,\xi]^\g_\wedge = 0
\}
\end{align*}
of $\g$-valued 1-forms on $M$ which are \emph{flat} in the sense that they obey the left Maurer-Cartan equation.

{\bf The smooth structure on the space of flat differential forms.}
 $\Om^1_{\text{flat}}(M,\g)$ inherits the smooth structure of a Fr\"olicher space (see \cite[Section 23]{KM97}) from the convenient vector space 
$\Om^1(M,\g)$, generated by all curves $c:\mathbb R\to \Om^1_{\text{flat}}(M,\g)$ which are smooth into 
$\Om^1(M,\g)$. 
This allows us to talk about smooth mappings running through it. 
\\
Moreover, it is a Fr\"olicher space with further structure: its kinematic tangent spaces are given by 
$T_\xi \Om^1(M,\g) = \{\et\in \Om^1(M,\g): d\et - [\xi,\et]^\g_\wedge =0\}$, and the tangent bundle is again the Fr\"olicher space
$$
T\Om^1(M,\g) = \{(\xi,\et)\in \Om^1(M,\g)^2: d\xi - \tfrac12 [\xi,\xi]^\g_\wedge = 0 \text{ and  }
d\et - [\xi,\et]^\g_\wedge =0\}\,.
$$
The space $\Om^1_{\text{flat}}(M,\g)$ is a manifold in the sense of \cite{Michor84a,Michor84b} where a cartesian closed category of manifolds based on smooth curves  instead of charts as developed which up to those whose tangent spaces (proved only for finite dimensional spaces there - for Banach spaces  the same proof applies) consists exactly of the usual ones.   

{\bf Question:} If $M$ is finite dimensional, is $\Om^1_{\text{flat}}(M,\g)$ a (split) submanifold of $\Om^1(M,\g)$?
Can this be shown by a quasilinear version of Hodge theory? 

%If $M$ is compact, then $H^1(M)=0$ since $M$ is simply connected. For a Riemannian metric on $M$ we can consider the Hodge Laplacian $\De = d\,d^* +d^* d$ where $d^*$ is the Hodge codifferential. Since $H^1(M)=0$, the Laplacian $\De$ is injective and the Green operator $G$ is an inverse to $\De$. 
%PERHAPS OMIT THIS.

But we shall see below that $\Om^1_{\text{flat}}(M,\g)$ is diffeomorphic to $C^{\infty}((M,x_0),(G,e))$ and that it gets a Lie group structure via this map, at least in the case when $M$ is compact. 

\subsection{The space $C^\infty(M,G)$}\label{nmb:3.2}
If $M$ is a compact manifold, then $C^\infty(M,G)$ is a smooth manifold; see \cite[Section 42]{KM97}. The same is true if $M$ is locally compact for the space $C^\infty_c(M,G)$ of smooth mappings which equal the constant $e$ off some compact subset of $M$; let us call these smooth mappings with compact support. But in general, there is no smooth structure admitting an atlas on $C^\infty(M,G)$, since the space $C^\infty(M,N)$ is not locally contractible in its natural topology.
Thus we consider $C^\infty(M,G)$ as a Fr\"olicher Lie group with pointwise group operations in the general situation; see \cite[Section 23]{KM97}: 
The    canonical smooth structure is described by 
    \[
      C^\infty(M,G)\xrightarrow{C^\infty(c,f)} C^\infty(\mathbb R,\mathbb R) 
      \xrightarrow{\la} \mathbb R
    \]
    where $c\in C^\infty(\mathbb R,M)$, where $f$ is in 
    $C^\infty(G,\mathbb R)$ or in a generating set of functions, and 
    where $\la\in C^\infty(\mathbb R,\mathbb R)'$.
It is a Fr\"olicher space with a natural tangent bundle 
$$
TC^\infty(M,G)= C^\infty(M,TG)\xrightarrow{(\pi_G,\ka^r)_*} C^\infty(M,G)\x C^\infty(M,\g)\,.
$$
The space $C^{\infty}(M,G)$ is a manifold based on smooth curves instead of charts as developed in \cite{Michor84a,Michor84b}.  

\begin{theorem}\label{nmb:3.3}
Let $G$ be a regular Lie group with Lie algebra $\g$. Let $(M,x_0)$ be a simply connected pointed possibly infinite dimensional  manifold.
In general, there exists a unique smooth mapping, called \emph{evolution operator}, 
$$
\on{Evol}=\on{Evol}^M_G: \Om^1_{\text{flat}}(M,\g) \to C^{\infty}(M, G)
$$
which satisfies 
$$
\de^r \o \on{Evol} = \on{Id} \text{ and  } \on{Evol}(x_0) = e.
$$
$\Evol$ is a natural transformation $\Om^1_{\text{flat}}(\;, \on{Lie}(\;))\to C^{\infty}(\;,\;)$ between the two contra-covariant functors 
from the category of simply connected smooth manifolds times the category of Lie groups into the category of Fr\"olicher spaces:
For smooth $h:(N,y_0)\to (M,x_0)$ and $\ph:G\to H$ a smooth homomorphism between regular Lie groups with 
$T_e\ph = \ph':\g\to \mathfrak h$ we have 
$$
C^{\infty}(h,\ph)\o \on{Evol}_G^{(M,x_0)} = \on{Evol}^{(N,y_0)}_H \o \Om^1_{\text{flat}}(h,\ph')\,;
$$
in detail $\on{Evol}(h^*\xi) = \on{Evol}(\xi)\o h$  and $\ph\o \on{Evol}_G(\xi) = \on{Evol}_H(\ph'\o \xi)$.
\end{theorem}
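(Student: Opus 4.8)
The plan is to reduce the construction of $\on{Evol}^M_G$ to the one-dimensional evolution operator of the regular Lie group $G$ (see \ref{nmb:2.4} and \ref{nmb:2.5}), combined with the classical fact that a flat $\g$-valued 1-form on a simply connected manifold has a primitive in $G$, done pointwise along curves and then glued by simple connectivity. Concretely, given $\xi\in\Om^1_{\text{flat}}(M,\g)$ I would define, for each smooth curve $c:[0,1]\to M$ with $c(0)=x_0$, the pulled-back 1-form $c^*\xi\in\Om^1([0,1],\g)$, write $c^*\xi = X_c(t)\,dt$ with $X_c\in C^{\infty}([0,1],\g)$, and set
$$
\on{Evol}(\xi)(c(1)):=\on{Evol}^r_G(X_c)(1)=\on{evol}^r_G(X_c).
$$
Since $G$ is regular, $\on{evol}^r_G$ exists and is smooth; by \ref{nmb:2.4}(2)--(4) the relevant ODE has a unique global solution depending smoothly on $X_c$. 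The first substantive step is \textbf{well-definedness}: the value must depend only on the endpoint $c(1)$, not on the curve $c$. This is the homotopy-invariance argument. For two curves $c_0,c_1$ with the same endpoints, pick (by simple connectivity) a smooth homotopy $H:[0,1]^2\to M$ with $H(0,\cdot)\equiv x_0$ relative to the endpoints, pull $\xi$ back to $H^*\xi\in\Om^1_{\text{flat}}([0,1]^2,\g)$, and observe that $H^*\xi$ still satisfies the left Maurer--Cartan equation because pullback commutes with $d$ and with $[\;,\;]^\g_\wedge$ (this is the computation already done in Theorem~\ref{nmb:2.3}). On the square, a flat form is exactly a zero-curvature connection; the standard ODE argument — differentiate the $s$-dependent solution of the $t$-ODE and use the Maurer--Cartan equation to show the $s$-derivative of the endpoint vanishes — shows the endpoint value is $s$-independent. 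This is where one genuinely uses regularity of $G$ again (to solve the $s$-direction ODE and to differentiate under it), and it is the step I expect to be the main technical obstacle, because it must be carried out in the convenient-calculus setting where one works with smooth curves rather than charts; the bookkeeping of smooth dependence on parameters has to be done carefully, invoking cartesian closedness of the category and the smoothness clause in \ref{nmb:2.4}(4).

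\textbf{Smoothness of $\on{Evol}$ and the identity $\de^r\circ\on{Evol}=\on{Id}$.} Once $\on{Evol}(\xi):M\to G$ is well-defined with $\on{Evol}(\xi)(x_0)=e$, smoothness as a map into the Fr\"olicher space $C^\infty(M,G)$ is checked by testing against the generating structure described in \ref{nmb:3.2}: for a smooth curve $c\in C^\infty(\mathbb R,M)$ one must show $t\mapsto \on{Evol}(\xi)(c(t))$ is smooth into $G$, and that this depends smoothly on $\xi$ (equivalently on the generating smooth curves in $\Om^1_{\text{flat}}(M,\g)$). But along a curve $c$ through $x_0$ this is literally $\on{Evol}^r_G(c^*\xi/dt)$, so it is handled by the smoothness of $\on{evol}^r_G$ together with the smooth dependence of $c^*\xi$ on $c$ and $\xi$; for a general $c$ not based at $x_0$ one prepends a fixed path from $x_0$ to $c(0)$ and uses the Leibniz-type cocycle property, or reparametrization formulas as in \ref{nmb:2.5}. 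The relation $\de^r(\on{Evol}(\xi))=\xi$ is checked pointwise on tangent vectors: given $\eta_x\in T_xM$, realize it as $c'(0)$ for a curve $c$ with $c(0)=x$, extend to a curve from $x_0$, and unwind the definition: $\de^r(\on{Evol}(\xi))(\eta_x) = \ka^r_{\on{Evol}(\xi)(x)}(\tfrac{d}{dt}|_0\on{Evol}(\xi)(c(t)))$, and by construction $\tfrac{d}{dt}|_0$ of the evolution solves exactly $\ka^r(\partial_t g)=X_c(0)=\xi(\eta_x)$. Uniqueness of $\on{Evol}$ follows because any two smooth maps $M\to G$ with the same right logarithmic derivative and agreeing at $x_0$ must agree: their pointwise "quotient" has vanishing $\de^r$ hence is locally constant along curves by the uniqueness clause \ref{nmb:2.4}(2), hence constant by connectedness of $M$.

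\textbf{Naturality.} For $h:(N,y_0)\to(M,x_0)$ the identity $\on{Evol}(h^*\xi)=\on{Evol}(\xi)\circ h$ is immediate from the construction: a curve $c$ in $N$ through $y_0$ is sent by $h$ to a curve $h\circ c$ in $M$ through $x_0$, and $(h\circ c)^*\xi = c^*(h^*\xi)$, so the two one-dimensional evolutions coincide; alternatively it follows from $\de^r(\on{Evol}(\xi)\circ h)=h^*\de^r(\on{Evol}(\xi))=h^*\xi$ (Theorem~\ref{nmb:2.3}), the basepoint condition, and uniqueness. For a smooth homomorphism $\ph:G\to H$ of regular Lie groups, $\de^r_H(\ph\circ\on{Evol}_G(\xi)) = \ph'\circ\de^r_G(\on{Evol}_G(\xi)) = \ph'\circ\xi$ by the transformation rule for $\de^r$ under homomorphisms in Theorem~\ref{nmb:2.3}, and $\ph(\on{Evol}_G(\xi)(x_0))=\ph(e)=e$; since $\ph'\circ\xi$ is again flat (apply $\ph'$, a Lie algebra homomorphism, to the Maurer--Cartan equation) and $H$ is regular, uniqueness of $\on{Evol}_H$ forces $\ph\circ\on{Evol}_G(\xi)=\on{Evol}_H(\ph'\circ\xi)$. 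Assembling these two gives the asserted equality $C^\infty(h,\ph)\circ\on{Evol}_G^{(M,x_0)} = \on{Evol}_H^{(N,y_0)}\circ\Om^1_{\text{flat}}(h,\ph')$ of natural transformations.
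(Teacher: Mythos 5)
Your argument is correct, but it organizes the proof differently from the paper. The paper constructs the flat principal connection $\om^\xi=\ka^l-(\Ad\o\nu).\xi$ on the trivial bundle $M\x G\to M$, defines parallel transport via the one-dimensional evolution (exactly your $\on{evol}^r_G(X_c)$), invokes the integrability of the horizontal distribution from \cite[39.2]{KM97} to produce local horizontal sections over star-shaped charts, and then uses the covering-space argument: $\pr_1$ restricted to a horizontal leaf is a covering of the simply connected $M$, so it inverts to a global horizontal section $(\Id_M,f)$ with $\de^rf=\xi$. You instead define $f$ directly by path-ordered integration along curves from $x_0$ and prove well-definedness by homotopy invariance on the square, i.e.\ the two-parameter computation: with $A=(H^*\xi)(\p_t)$, $B=(H^*\xi)(\p_s)$, flatness forces $h:=\de^rg(\p_s)-B$ to satisfy $\p_th=[A,h]$ with $h(s,0)=0$, hence $h\equiv0$ (uniqueness of this linear ODE in a convenient space is not automatic, but follows from the conjugation trick: $\p_t\bigl(\Ad(g\i)h\bigr)=0$), so the endpoint is $s$-independent. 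This two-parameter differentiation is precisely the content that the paper outsources to the ``quite lengthy proof of horizontality'' in \cite[39.2]{KM97}, so the technical core coincides; what differs is the packaging. Your route is more elementary and self-contained (no foliations, no covering spaces), at the cost of having to supply by hand the existence of smooth homotopies in the convenient setting and the smooth parameter dependence of the two-parameter ODE; the paper's route buys a geometric picture (flat connections and horizontal leaves) that is reused later, e.g.\ in the proof of Theorem \ref{nmb:3.7}. Your treatment of uniqueness, of $\de^r\o\Evol=\Id$, and of naturality via $\de^r$ plus the basepoint condition agrees with the paper's.
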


If one wants to avoid choosing a point $x_0$ in $M$, then $\on{Evol}$ exists and is unique up to right translations in $G$.
Note that a Lie group $G$ is regular if the theorem holds for $(M,x_0)= (\mathbb R,0)$. 
See  \cite[40.2]{KM97} for the main part of the proof. For finite dimensional $M$ and Lie groups a proof can be found in 
\cite{Onishchik61,Onishchik64,Onishchik67}, 
or in \cite{Cartan35,Cartan37} or  \cite{Griffiths74} (proved with moving 
frames); see also \cite[5.2]{AlekseevskyMichor95b}.

\begin{proof} See \cite[40.2 using 39.1 and 39.2]{KM97} with a small gap for the first part. 
For completeness' sake we repeat the proof in the more simple situation here.

If we are given a 1-form $\xi\in\Om^1(M,\g)$ with 
$d\xi - \tfrac12[\xi,\xi]^\g_\wedge=0$ then we consider the 
1-form $\om^\xi\in\Om^1(M\x G,\g)$ given by
\[
\om^\xi = \ka^l - (\Ad\o\nu).\xi\,,\quad
\om^\xi_{(x,g)}(Y_x, T\mu_g. X) = X - \Ad(g\i).\xi_x(Y_x)
\]
for $Y_x\in T_xM$ and $X\in \g$. Then $\om^\xi$ is a principal connection form on $M\x G$, since it 
reproduces the generators in $\g$ of the fundamental vector fields 
for the principal right action, i.e., the left invariant vector 
fields $0\x L_X$, and $\om^\xi$ is $G$-equivariant:
\begin{align*}
((\mu^g)^*\om^\xi)_{(x,h)} &= \om^\xi_{(x,hg)}\o (\Id_{TM}\x T(\mu^g)) = 
     T(\mu_{g\i h\i}).T(\mu^g) - \Ad(g\i h\i).\xi_x\\
&= \Ad(g\i).\big(\ka^l_h - \Ad(h\i).\xi_x\big) = \Ad(g\i).\om^\xi_{(x,h)}.
\end{align*}
Since the structure group $G$ is regular, for each smooth curve $c:\mathbb R\to M$, the smooth lift of $c$ given by 
$$
t\mapsto \big(c(t), \on{Evol}_G^r(-\om^\xi(c',0))(t).u\big) = (c(t),g(t)) =: \on{Pt}^\xi(c,t,u)
$$
defines a mapping $\on{Pt}^\xi(c,t,\;): \{c(0)\}\x G \to \{c(t)\}\x G$ which has all the usual properties of parallel transport, namely:
(1) It is horizontal since 
\begin{align*}
g'(t) &= \p_t  \on{Evol}_G^r(-\om^\xi(c',0))(t).u = -T\mu^{g(t)}.\om^\xi(c',0)\quad\text{ by \ref{nmb:2.5} (1),}
\\&
= 0 + T\mu_{g(t)} \Ad(g(t)\i).\xi(c'(t))
\\
\om^\xi(c'(t),g'(t)) &=  \Ad(g(t)\i).\xi(c'(t)) - \Ad(g(t)\i).\xi(c'(t)) = 0\,.
\end{align*}
(2) It is $G$-equivariant for the principal right action. (3) For smooth $f:\mathbb R\to\mathbb R$ we have $\on{Pt}^\xi(c, f(t),u) = \on{Pt}^\xi(c\o f,t, \on{Pt}^\xi(c,f(0),u))$. (4) It is smooth as a mapping $\on{Pt}^\xi:C^{\infty}(\mathbb R,M)\x_{\on{ev}_0,M,\on{pr}_3}(\mathbb R\x M\x G) \to M\x G$ where $C^{\infty}(\mathbb R, M)$ carries the Fr\"olicher structure, corresponding to the compact $C^{\infty}$-topology.
(5) $\on{Pt}^\xi$ depends also smoothly on the choice of $\xi\in \Om^1_{\text{flat}}(M,\g)$; this was not checked in  \cite[39.1]{KM97}. A  smooth curve $s\mapsto \xi(s)\in \Om^1_{\text{flat}}(M,\g)$ leads to a smooth curve $s\mapsto\on{Pt}^{\xi(s)}(c,t,u)\in M\x G$ by looking at the explicit formulas above.

The connection $\om^\xi$ is flat since
\begin{align*}
d\om^\xi+\tfrac12[\om^\xi,\om^\xi]_\wedge 
&= d\ka^l + \tfrac12[\ka^l,\ka^l]_\wedge 
     - d(\Ad\o\nu) \wedge \xi - 
     (\Ad\o\nu). d\xi\\
&\quad - [\ka^l,(\Ad\o\nu). \xi]_\wedge + 
     \tfrac12[(\Ad\o\nu).\xi,
     (\Ad\o\nu). \xi]_\wedge\\
&= - (\Ad\o\nu). (d\xi- 
     \tfrac12[\xi,\xi]_\wedge) = 0\,.
\end{align*}
Since the structure group $G$ is regular, by theorem \cite[39.2]{KM97} 
the horizontal bundle $\ker(\om^\xi)\subset T(M\x G)$ is integrable: For each $x\in M$ let $u:U\to E$ be a chart with $x\in U$ and $u(x)=0$ such that $u(U)\subset E$ is a star-shaped $c^\infty$-open set in $E$. For $y\in U$ let $c_y:[01]\to U$ be given by $c_y(t)= u\i (t,u(y))$. Then $\ps:U\to M\x G$ given by $\ps(y)= \on{Pt}^\xi(c_y,1,g)$ is a smooth section $U\to M\x G$ through $(x,g)$ which is horizontal thus an integral submanifold of $\ker \om^\xi$; for the quite lengthy proof of horizontality see the proof of theorem \cite[39.2]{KM97}.

The projection $\pr_1 :M\x G\to M$, restricted to each horizontal 
leaf, is a covering. Thus, it may be inverted over the simply 
connected manifold $M$, and the inverse $(\Id_M,f):M\to M\x G$ is a horizontal section, i.e., $T(\Id_M,f):TM \to \ker(\om^\xi)$ is an isomorphism. 
Therefore 
\begin{gather*}
0=((\Id,f)^*\om^\xi)_x =  (f^*\ka^l - f^*(\Ad \o\nu).\xi)_x =( \de^lf)_{x} - \Ad(f(x)\i).\xi_x
\\
(\de^r f)_x = \Ad(f(x))(\de^l f)_x = \xi_x
\end{gather*}
 for $x\in M$, as required.
Moreover, $(\Id_M\x f)$ is
unique up to the choice of the branch of the covering and the choice 
of the leaf, i.e., $f$ is unique up to a right translation by an 
element of $G$. We may fix $f =: \Evol^M_G(\xi)$ by stipulating $f(x_0)= e$. Moreover, $\Evol^M_G(\xi)$ depends smoothly on $\xi$ since the parallel transport $\on{Pt}^\xi$ depends smoothly on $\xi$.

It remains to check that $((M,x_0),G)\mapsto \Evol^{(M,x_0)}_G$ is a natural transformation. For $h:(N,y_0)\to (M,x_0)$ we have, using \ref{nmb:2.3}
\begin{align*}
\de^r \Evol^{(N,y_0)}_G(h^*\xi) &= h^*\xi = h^*\de^r \Evol^{(M,x_0)}(\xi) = \de^r(\Evol^{(M,x_0)}(\xi)\o f)\,,
\end{align*}
thus $\Evol^{(N,y_0)}_G(h^*\xi) =\Evol^{(M,x_0)}(\xi)\o f$ since both sides agree on $y_0$.
If $\ph:G\to H$ is a smooth homomorphism, then $\ph^* \ka^r_H = \ph' . \ka^r_G$ 
since 
\begin{align*}
(\ph^*\ka^r_H)_g(T_e\mu^g.X) &= (\ka^r_H)_{\ph(g)}(T_g\ph.T_e\mu^g.X) = T\mu^{\ph(g)\i}.T_g\ph.T_e\mu^g.X
\\&
=T_e\ph.X = \ph'.T_g\mu^{g\i}.T_e\mu^g.X =\ph'.(\ka^r_G)_g(T_e\mu^g.X)\,,
\end{align*}
and obviously
$\ph'_*: \Om^1_{\text{flat}}(M,\g) \to \Om^1_{\text{flat}}(M,\mathfrak h) $, thus by \ref{nmb:2.3} again we get
\begin{align*}
\de^r (\ph\o \Evol^M_G(\xi)) &= \ph'.\de^r \Evol^M_G(\xi) = \ph'.\xi = \de^r \Evol^M_H(\ph'.\xi)
\end{align*}
thus $\ph\o \Evol^M_G(\xi)=\Evol^M_H(\ph'.\xi)$ since both sides map $x_0\in M$ to $e\in H$.
\end{proof}

\begin{theorem}\label{nmb:3.4} For a regular Lie group $G$ and simply connected $(M,x_0)$ we have for 
$\xi,\et\in \Om^1_{\text{flat}}(M,\g)$:
\begin{gather*}
\Evol(\xi).\Evol(\et) = 
     \Evol\Bigl(\xi+(\Ad_G\o \Evol(\xi)).\et\Bigr),\\
\Evol(\xi)\i = \Evol\Bigl((-\Ad_G\o \nu\o\Evol(\xi)).\xi\Bigr),
\end{gather*}
so that $\Evol:\Om^1_{\text{flat}}(M,\g) \to C^\infty((M,x_0),(G,e))$ is a bijective smooth 
homomorphism of Fr\"olicher Lie groups, where  $C^\infty((M,x_0),(G,e))$ carries the pointwise group operations and where
on $\Om^1_{\text{flat}}(M,\g)$  
the operations are given by 
\begin{align*}
(\xi*\et)(x) &= \xi(x)+\Ad_G(\Evol(\xi)(x)).\et(x),\\
\xi\i(x) &= -\Ad_G(\Evol(\xi)(x)\i).\xi(x).
\end{align*}
With these operations and with 0 as unit element 
$(\Om^1_{\text{flat}}(M,\g),*)$ becomes a regular Fr\"olicher Lie group isomorphic to $C^\infty((M,x_0),(G,e))$.
Its Lie algebra is $T_0\Om^1_{\text{flat}}(M,\g) = \{\et\in\Om^1(M,\g)): d\et =0\}=: Z(M,\g)$ with bracket  
$$
[\xi_1,\xi_2]^{Z(M,\g)} = [\xi_1,d\i\xi_2]^\g + [d\i\xi_1,\xi_2]^\g, \quad \xi_i\in Z(M,\g)\,,
$$
where 
$$d\i : \{\xi\in\Om^1(M,\g): d\xi = 0\} =: Z(M,\g) \to C^{\infty}((M,x_0),(\g,0))$$ 
is the bounded operator of the Poincar\'e lemma, the inverse of the exterior derivative.
\end{theorem}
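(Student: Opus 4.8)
The plan is to deduce everything from the Leibniz rule for $\de^r$ in Theorem \ref{nmb:2.3} together with the uniqueness in Theorem \ref{nmb:3.3}, and then to transport structure along the bijection $\Evol$.

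\emph{The two identities.} First I would set $f=\Evol(\xi)$ and $g=\Evol(\et)$, smooth maps $M\to G$ with $f(x_0)=g(x_0)=e$. The Leibniz rule of \ref{nmb:2.3} gives $\de^r(f.g)=\de^rf+(\Ad_G\o f).\de^rg=\xi+(\Ad_G\o\Evol(\xi)).\et$, and by \ref{nmb:2.3} this form automatically satisfies the left Maurer--Cartan equation, hence lies in $\Om^1_{\text{flat}}(M,\g)$, while $(f.g)(x_0)=e$; so by uniqueness in \ref{nmb:3.3}, $\Evol\bigl(\xi+(\Ad_G\o\Evol(\xi)).\et\bigr)=\Evol(\xi).\Evol(\et)$. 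Likewise, applying the Leibniz rule to the constant map $f.f\i\equiv e$ gives $0=\xi+(\Ad_G\o f).\de^r(f\i)$, so $\de^r(f\i)=-(\Ad_G\o\nu\o f).\xi$, again flat and vanishing at $x_0$, whence the inversion formula. In particular the operations $*$ and $\xi\mapsto\xi\i$ of the statement take values in $\Om^1_{\text{flat}}(M,\g)$, and $\Evol(0)$ is the constant map $e$.

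\emph{Isomorphism of Fr\"olicher Lie groups.} Next I would note that $\de^r\o\Evol=\Id$ by \ref{nmb:3.3}, and that for $f\in C^\infty((M,x_0),(G,e))$ the flat form $\de^rf$ has $\Evol(\de^rf)$ with the same right logarithmic derivative and the same value at $x_0$ as $f$, so $\Evol(\de^rf)=f$; thus $\de^r$ is a two-sided inverse of $\Evol$. The two identities say that $\Evol$ carries $*$ to pointwise multiplication and inversion to pointwise inversion; transporting the group structure of $C^\infty((M,x_0),(G,e))$ along this bijection, $\Om^1_{\text{flat}}(M,\g)$ becomes a group under $*$ with unit $0$, and $\Evol$ a group isomorphism. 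On the Fr\"olicher side: $C^\infty((M,x_0),(G,e))$ is the kernel of the smooth homomorphism $\on{ev}_{x_0}:C^\infty(M,G)\to G$, hence a Fr\"olicher Lie group with pointwise operations (cf.\ \ref{nmb:3.2}) and Lie algebra $C^\infty((M,x_0),(\g,0))$ with pointwise bracket; $\Evol$ is smooth by \ref{nmb:3.3}; and its inverse $\de^r=(\;)^*\ka^r$ is smooth because a smooth curve $s\mapsto f_s$ in $C^\infty((M,x_0),(G,e))$ corresponds to a smooth map $\mathbb R\x M\to G$, so $(s,\xi_x)\mapsto\ka^r_{f_s(x)}(T_xf_s.\xi_x)$ is smooth and, by cartesian closedness, $s\mapsto\de^rf_s$ is smooth into $\Om^1(M,\g)$. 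Hence $\Evol$ is an isomorphism of Fr\"olicher Lie groups and $*$ and inversion are smooth.

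\emph{Regularity and the Lie algebra.} Regularity transports along $\Evol$, so it is enough that $C^\infty((M,x_0),(G,e))$ be regular: for a smooth curve $X$ in $C^\infty((M,x_0),(\g,0))$ the map $(t,x)\mapsto\Evol^r_G\bigl(X(\cdot)(x)\bigr)(t)$ solves the defining ODE pointwise in $x$, is smooth jointly in $(t,x)$ and in parameters of $X$ since $\evol^r_G$ is smooth and by cartesian closedness, and is $e$ at $x=x_0$ because $X(\cdot)(x_0)=0$. By the tangent-space description of \ref{nmb:3.1} at $\xi=0$, $T_0\Om^1_{\text{flat}}(M,\g)=\{\et\in\Om^1(M,\g):d\et=0\}=Z(M,\g)$; on the connected simply connected $M$, $d:C^\infty((M,x_0),(\g,0))\to Z(M,\g)$ is a bijection (injective since a closed form vanishing at $x_0$ is $0$, surjective by the Poincar\'e lemma) with inverse $d\i$. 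To compute $T_0\Evol$ I would take a smooth curve $s\mapsto\xi(s)$ in $\Om^1_{\text{flat}}(M,\g)$ with $\xi(0)=0$, $\p_s|_0\xi(s)=\et$, put $F(s,x)=\Evol(\xi(s))(x)$, $\Xi:=\de^rF\in\Om^1(\mathbb R\x M,\g)$, and decompose $\Xi=A\,ds+B$ with $B(s,\cdot)=\xi(s)$ and $A(0,\cdot)=\p_s|_0F(s,\cdot)=:\ze=T_0\Evol(\et)$; the $ds\wedge(\text{1-form on }M)$-component of the Maurer--Cartan equation for $\Xi$ reads $\p_sB=d_MA+[A,B]^\g_\wedge$, which at $s=0$ (where $B(0,\cdot)=0$) gives $\et=d_M\ze$, while $\ze(x_0)=0$ since $F(s,x_0)\equiv e$; hence $T_0\Evol=d\i$. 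Being the tangent map of the smooth $\Evol$, $d\i$ is a bounded linear operator; and since $\Evol$ is a group isomorphism, $d\i$ identifies $Z(M,\g)$ with $C^\infty((M,x_0),(\g,0))$ under the pointwise bracket, so $[\xi_1,\xi_2]^{Z(M,\g)}=d\bigl([d\i\xi_1,d\i\xi_2]^\g\bigr)$; the graded Leibniz rule $d\bigl([\al,\be]^\g_\wedge\bigr)=[d\al,\be]^\g_\wedge+(-1)^{\deg\al}[\al,d\be]^\g_\wedge$ applied to the $0$-forms $\al=d\i\xi_1$, $\be=d\i\xi_2$, with $dd\i\xi_i=\xi_i$, then gives the asserted formula.

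\emph{The main obstacle.} No single estimate is hard; the real work is the Fr\"olicher-space bookkeeping — smoothness of $\de^r$ as the inverse of $\Evol$, the Fr\"olicher Lie group structure and Lie algebra of $C^\infty((M,x_0),(G,e))$, the joint smoothness of the pointwise solution of the evolution ODE, and the identification $T_0\Evol=d\i$ — whereas the two identities and the regularity argument are routine once \ref{nmb:2.3} and \ref{nmb:3.3} are in hand.
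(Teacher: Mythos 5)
Your proposal is correct and follows essentially the same route as the paper: the two group identities come from the Leibniz rule for $\de^r$ (which the paper re-derives as the pullback identity $\mu^*\ka^r=\pr_1^*\ka^r+(\Ad\o\pr_1)\pr_2^*\ka^r$) together with the uniqueness in Theorem \ref{nmb:3.3}, and the Lie bracket comes from the same Maurer--Cartan computation on $\mathbb R\x M$, which you read as $T_0\Evol=d\i$ where the paper reads it as $T_e\de^r=d$. The only substantive addition is your explicit pointwise-evolution argument for the regularity of $C^\infty((M,x_0),(G,e))$, a point the paper asserts without detailed proof.
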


This formula for the Lie bracket fits nicely to the 1-dimensional version derived in \cite[38.12]{KM97}.

\begin{proof} We have $\mu^*\ka^r = \pr_1^*\ka^r + (\Ad\o\pr_1)\pr_2^*\ka^r$ because
\begin{align*}
(\mu^*\ka^r)_{(a,b)}(\xi_a,\et_b) &= \ka^r_{\mu(ab)}\big(T\mu.(\xi_a,\et_b)\big)
= \ka^r_{ab}\big(T_a\mu^b.\xi_a + T_b\mu_a.\et_b\big)
\\&
= T_{ab} \mu^{(ab)\i}\big(T_a\mu^b.\xi_a + T_b\mu_a.\et_b\big)
\\&
= T \mu^{a\i} .T\mu^{b\i}.T\mu^b.\xi_a + T \mu^{a\i} .T\mu^{b\i}.T\mu_a.\et_b
\\&
= \ka^r_a(\xi_a) + \Ad(a).\ka^r_b(\et_b).
\end{align*}
For $\xi,\et\in \Om^1_{\text{flat}}(M,\g)$ we have therefore
\begin{align*}
\de^r&\big(\Evol(\xi).\Evol(\et)\big) = \big(\mu\o(\Evol(\xi),\Evol(\et))\big)^*\ka^r
\\&
= (\Evol(\xi),\Evol(\et))^*\mu^*\ka^r
= (\Evol(\xi),\Evol(\et))^*\big( \pr_1^*\ka^r + (\Ad\o\pr_1)\pr_2^*\ka^r\big)
\\&
= \Evol(\xi)^*\ka^r + (\Ad\o \Evol(\xi)).\Evol(\et)^*\ka^r = \xi + \Ad(\Evol(\xi))\et.
\end{align*}
which implies 
\[
\Evol(\xi).\Evol(\et) = \Evol(\xi*\et),\quad \Evol(\xi)\i=\Evol(\xi\i).
\]
Thus, $\Evol: \Om^1_{\text{flat}}(M,\g)\to C^\infty(M,G)$ is a group 
isomorphism onto the subgroup $ C^\infty((M,x_0),(G,e)):=\{f\in C^\infty(M,G):f(x_0)=e\}$
with the pointwise product, 
which, however, is only a Fr\"olicher space in general, see \cite[23.1]{KM97}. If $M$ is compact then $C^\infty((M,x_0),(G,e))$ is smooth regular Lie group. If $M$ is finite dimensional, then one has to refine the topology (control near infinity) to make it into a disjoint union
of regular Lie groups, and then one has to mimick this procedure also on $\Om^1_{\text{flat}}(M,\g)$. 

In general, we will just take both groups as Fr\"olicher spaces with special properties (having a tangent bundle, e.g.).

It follows that the product on $\Om^1_{\text{flat}}(M,\g)$ has the  properties of a group structure. 

A direct proof is fun and needs naturality of $\Evol$ in an essential way. As entertainment we compute the following:
\begin{align*}
\xi\i * \xi &= - \Ad_G(\Evol(\xi)\i).\xi + \Ad_G\big(\Evol(-\Ad_G(\Evol(\xi))\i).\xi\big)\xi = 0 \text{ since }
\\&
(\Evol(\xi)\i)^*\ka^r = (\nu\o\Evol(\xi))^*\ka^r = \Evol(\xi)^*\nu^*\ka^r 
\\&\qquad= \Evol(\xi)^*(-(\Ad\o\nu)\ka^r) = -\Ad(\Evol(\xi)\i)\Evol(\xi)^*\ka^r 
\\&\qquad =  -\Ad(\Evol(\xi)\i)\xi \text{ implies again the expression for the inverse}
\\&
\Evol(\xi)\i = \Evol(-\Ad(\Evol(\xi)\i)\xi)\,.
\end{align*} 

Now we aim for the Lie bracket. Since $\de^r: C^\infty((M,x_0),(G,e)) \to \Om^1_{\text{flat}}(M,\g) \subset \Om^1(M,\g)$ is is a smooth group isomorphism of Fr\"olicher Lie groups, we can just take its tangent mapping at the constant $e$ which will become a homomorphism of Lie algebras.
As an aside note that $\de^r$ is a Lie derivative analogous to the finite dimensional version in \cite[Chapter IX]{KMS93}.  The infinite dimensional version has been worked out in detail in \cite[12.2--12.5]{Michor80} for mappings $f:M\to N$; here the only difference is that the forms are $\g$-valued and that we make use the right trivilization of $TG$.
So we choose a smooth curve $t\mapsto f(t)\in C^\infty((M,x_0),(G,e))$ with $f(0)=e$ and $\p_t|_0 f(t) =X\in C^{\infty}(M,\g)$ with $X(x_0)=0$; then we consider the smooth mapping $\hat f:\mathbb R\x M \to G$. By the Maurer-Cartan equation \ref{nmb:2.3}, for a vector field $Y\in \X(M)$ we have 
\begin{align*}
0&=d(\de^r\hat f)((\p_t,0_M),(0_{\mathbb R},Y)) - \big[(\de^r\hat f)(\p_t,0_M), (\de^r\hat f)(0_{\mathbb R},Y)\big]^\g
\\&
= \p_t\big((\de^r\hat f)(0_{\mathbb R},Y)\big) - \L_Y\big(T\mu^{f\i}\p_t f\big) - (\de^r\hat f)([(\p_t,0_M),(0_{\mathbb R},Y)]^{\X(\mathbb R\x M)})
\\&\qquad
- \big[(\de^r\hat f)(\p_t,0_M), (\de^r\hat f)(0_{\mathbb R},Y)\big]\,.
\end{align*}
Choosing $t=0$ and using $f(0,x)=e$ this becomes
$$
(T_e\de^r.X)(Y) = \L_YX = dX(Y). 
$$
Now we can write down the Lie bracket. Since $M$ is simply connected, its de~Rham cohomology $H^1(M)=0$, which also holds for 
$\g$-valued 1-forms and for infinite dimensional $M$; see \cite[Section 34]{KM97}. Let 
$$d\i : \{\xi\in\Om^1(M,\g): d\xi = 0\} =: Z(M,\g) \to C^{\infty}((M,x_0),(\g,0))$$ 
be the bounded (by lemma \ref{nmb:3.5} below) operator of the Poincar\'e lemma; i.e., the inverse of exterior derivative. Then for $X_i\in C^{\infty}((M,x_0),(\g,0))$ we have
\begin{align*}
T_e\de^r.[X_1,X_2]^\g &= d [X_1,X_2]^\g = [dX_1,X_2]^\g + [X_1,dX_2]^\g,\quad \text{ thus }
\\
[\xi_1,\xi_2]^{Z(M,\g)} &= [\xi_1,d\i\xi_2]^\g + [d\i\xi_1,\xi_2]^\g, \text{ for } \xi_i\in Z(M,\g). \qedhere
\end{align*}
\end{proof}

\begin{lemma}\label{nmb:3.5} 
For a simply connected  manifold $M$ the inverse of the exterior derivative $d\i:Z(M,\g) \to C^{\infty}((M,x_0),(\g,0))$ is a bounded linear operator.
\end{lemma}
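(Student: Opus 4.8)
The plan is to exhibit an explicit formula for $d\i$ and read boundedness off it. Linearity is immediate: since $M$ is simply connected, $H^1(M,\g)=0$ (also for $\g$-valued forms, as already used in the proof of \ref{nmb:3.3} above), so each $\xi\in Z(M,\g)$ has a primitive, unique once normalised to vanish at $x_0$, and $\xi\mapsto$ (this primitive) is visibly additive and homogeneous. For boundedness, by the conventions of convenient calculus it suffices to check that $d\i$ takes smooth curves to smooth curves (a linear map out of a convenient vector space is bounded iff it does so, \cite{KM97}); and since $Z(M,\g)=\ker(d)$ is a $c^\infty$-closed linear subspace of the convenient space $\Om^1(M,\g)$, its smooth curves are exactly the smooth curves $s\mapsto\xi(s)$ into $\Om^1(M,\g)$ with values in $Z(M,\g)$. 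Finally, by the description of the smooth structure on $C^\infty(M,G)$ in \ref{nmb:3.2} and cartesian closedness, smoothness of $s\mapsto d\i\xi(s)$ into $C^\infty((M,x_0),(\g,0))$ is in turn equivalent to smoothness of the associated map $\mathbb R\x M\to\g$, $(s,x)\mapsto (d\i\xi(s))(x)$.

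Next I would write $d\i$ as a path integral: for $x\in M$ choose a smooth curve $\ga_x:[0,1]\to M$ from $x_0$ to $x$ and set $(d\i\xi)(x):=\int_0^1\ga_x^*\xi$. This is independent of $\ga_x$: two choices $\ga_x,\ga_x'$ differ by a loop at $x_0$ which, by simple connectedness and smoothing, extends to a smooth map $\si$ from the square $[0,1]^2$ (constant on the two vertical edges), so $d\xi=0$ and Stokes on the square give $\int_0^1\ga_x^*\xi-\int_0^1(\ga_x')^*\xi=\int_{[0,1]^2}\si^*(d\xi)=0$. Taking the constant curve shows $(d\i\xi)(x_0)=0$. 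Locally this agrees with the radial homotopy operator of the Poincar\'e lemma: given a chart $u:U\to E$ with $u(x_1)=0$ and $u(U)\subset E$ a star-shaped $c^\infty$-open set (as in the proof of \ref{nmb:3.3}) and a fixed smooth curve $\ga_1$ from $x_0$ to $x_1$, running first along $\ga_1$ and then radially in the chart gives, for $x\in U$,
\[
(d\i\xi)(x)=A+\int_0^1\big((u\i)^*\xi\big)_{t\cdot u(x)}\!\big(u(x)\big)\,dt,\qquad A:=\int_0^1\ga_1^*\xi\in\g .
\]
In particular $d\,(d\i\xi)=\xi$ and $d\i\xi$ is smooth, so $d\i$ is well defined.

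Now for the smooth dependence on the parameter. Fixing $(U,u,\ga_1)$ and a smooth curve $s\mapsto\xi(s)$ in $Z(M,\g)$, pulling back along the smooth maps $u\i$ and $\ga_1$ yields smooth curves $s\mapsto(u\i)^*\xi(s)$ in $\Om^1(u(U),\g)=C^\infty(u(U),L(E,\g))$ and $s\mapsto\ga_1^*\xi(s)$ in $\Om^1([0,1],\g)$. Then $A(s)=\int_0^1\ga_1^*\xi(s)$ is smooth in $s$ (integration of a smooth curve over a compact interval), and by cartesian closedness $(s,x,t)\mapsto\big((u\i)^*\xi(s)\big)_{t\cdot u(x)}(u(x))$ is smooth on $\mathbb R\x U\x[0,1]$ (here star-shapedness keeps $t\cdot u(x)$ in the domain); since integration over a compact interval with smooth parameters preserves smoothness, $(s,x)\mapsto(d\i\xi(s))(x)$ is smooth on $\mathbb R\x U$. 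As such chart domains cover $M$, the map $(s,x)\mapsto(d\i\xi(s))(x)$ is smooth on $\mathbb R\x M$, which by the first paragraph is exactly what is needed.

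The main obstacle is the last step: verifying that the two convenient-calculus tools used there --- cartesian closedness for spaces of vector-valued sections, and smoothness of parameter-dependent integration over compact intervals --- apply verbatim when $M$, hence the modelling space $E$, is infinite dimensional, and that the local formulas genuinely patch to a globally smooth map $\mathbb R\x M\to\g$. An alternative that sidesteps the patching is to invoke the bounded Poincar\'e lemma for $c^\infty$-open star-shaped subsets from \cite[Section 34]{KM97} directly and glue the resulting bounded homotopy operators along a cover of the simply connected $M$ by such sets.
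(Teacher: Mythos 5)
Your proof is correct and follows essentially the same route as the paper's: reduce boundedness of the linear map to smoothness along curves, then use the explicit Poincar\'e homotopy operator on star-shaped chart domains together with the local nature of the smooth structures on $Z(M,\g)$ and $C^{\infty}((M,x_0),(\g,0))$. Your path-integral gluing with the additive constants $A$ is in fact more careful than the paper's one-line patching (which tacitly covers $M$ by star-shaped charts all containing $x_0$), and the convenient-calculus tools you flag as the main obstacle --- cartesian closedness and parameter-dependent integration over compact intervals --- do apply verbatim for infinite-dimensional modelling spaces $E$.
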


\begin{proof}
For a star-shaped $c^{\infty}$-open subset $M$ in a convenient vector space $E$ this follows from the explicit formula for the Poincar\'e operator $d\i \xi(x) = \int_0^1t\om(tx)(x)dt$.  In general, we cover the simply connected manifold $M$ by open charts
$u:U\to u(U)\subset E$ with $x_0\in U$, $u(x_0)=0$, and  $u(U)$ star-shaped in $E$.  Via the diffeomorphisms $u^*$ the operators 
$d\i: Z(U,\g) \to C^{\infty}((U,x_0),(\g,0))$ are all bounded. Since $C^{\infty}((M,x_0),(\g,0))$ and  $Z(M,\g)$ both carry the initial structure with respect to the restriction operators to $U$, the result follows. 
\end{proof}

\begin{theorem}\label{nmb:3.6}
For a regular Lie group and $\xi\in \Om^1_{\text{flat}}(M,\g)$ we consider $f= \Evol(\xi):(M,x_0)\to (G,e)$.
For a mapping $h:M\to \g$ we consider
$\et:= \Ad(f)d h\in\Om^1(M,\g)$. Then we have $d\et - \frac12[\xi,\et]^\g_\wedge =0$.

If conversely $\et\in\Om^1(M,\g)$ satisfies $d\et - \frac12[\xi,\et]^\g_\wedge =0$ then there exists  a unique smooth $h:(M,x_0)\to (\g,0)$ such that $dh = \Ad(f\i)\et$.
\end{theorem}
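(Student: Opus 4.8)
The plan is to reduce both directions of the statement to one and the same exterior-derivative computation, namely differentiating the maps $\Ad\o f$ and $\Ad\o f\i$ over $M$. The only inputs beyond elementary calculus of $\g$-valued forms are the two identities recorded in \ref{nmb:2.1}, $d\Ad = (\ad\o\ka^r).\Ad$ and $d(\Ad\o\nu) = -(\Ad\o\nu).(\ad\o\ka^r)$, the defining property $\de^r f = f^*\ka^r = \xi$ of $f=\Evol(\xi)$ from Theorem \ref{nmb:3.3}, and, for the converse, the bounded Poincar\'e operator of Lemma \ref{nmb:3.5} together with the vanishing of the $\g$-valued first de~Rham cohomology of the simply connected $M$.

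For the forward implication I would first pull $d\Ad = (\ad\o\ka^r).\Ad$ back along $f$; since $f^*\ka^r = \xi$ this yields $d(\Ad\o f) = (\ad\o\xi).(\Ad\o f)$, so that for a tangent vector $Y_x$ at $x\in M$ the operator $d(\Ad\o f)(Y_x)$ sends $v\in\g$ to $[\xi(Y_x),\Ad(f(x)).v]$. Writing $\et = \Ad(f).dh$ (so $\et_x(Y_x) = \Ad(f(x)).dh_x(Y_x)$) and applying the Leibniz rule for $d$ gives $d\et = \big(d(\Ad\o f)\big)\wedge dh + \Ad(f).d(dh)$; the second summand vanishes because $d\o d = 0$, and evaluating the first on a pair $Y_1,Y_2$ of tangent vectors produces exactly $[\xi(Y_1),\et(Y_2)] - [\xi(Y_2),\et(Y_1)] = [\xi,\et]^\g_\wedge(Y_1,Y_2)$. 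Hence $d\et - [\xi,\et]^\g_\wedge = 0$; equivalently, $\et$ is a tangent vector to $\Om^1_{\text{flat}}(M,\g)$ at $\xi$ in the sense of \ref{nmb:3.1}.

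For the converse I would put $\ze := \Ad(f\i).\et = (\Ad\o\nu\o f).\et$ and run the same computation with the companion formula $d(\Ad\o f\i) = -(\Ad\o f\i).(\ad\o\xi)$, obtained by pulling $d(\Ad\o\nu) = -(\Ad\o\nu).(\ad\o\ka^r)$ back along $f$. The Leibniz rule then gives $d\ze = \Ad(f\i).\big(d\et - [\xi,\et]^\g_\wedge\big)$, which vanishes by the hypothesis on $\et$. Thus $\ze$ is a closed element of $Z(M,\g)$, and since $M$ is simply connected its $\g$-valued de~Rham cohomology $H^1(M,\g)$ is trivial (as recalled in the proof of Theorem \ref{nmb:3.4}; see \cite[Section 34]{KM97}). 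Therefore $h := d\i\ze$, with $d\i$ the bounded Poincar\'e operator of Lemma \ref{nmb:3.5}, is a smooth map $(M,x_0)\to(\g,0)$ with $dh = \ze = \Ad(f\i).\et$. Uniqueness follows at once: the difference of two solutions has vanishing differential, hence is locally constant, hence constant on the connected manifold $M$, with value $0$ at $x_0$.

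I do not expect a genuine obstacle here: the content of the theorem is simply that flatness of $\et$ relative to $\xi$ is precisely the integrability condition for the equation $dh = \Ad(f\i).\et$. The only points that call for care are getting the signs right in the pulled-back derivatives of $\Ad$ and $\Ad\o\nu$, and observing that the $\g$-valued Poincar\'e lemma is at our disposal exactly because $M$ is simply connected; both of these are already furnished by \ref{nmb:2.1} and Lemma \ref{nmb:3.5}.
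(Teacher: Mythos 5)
Your proof is correct and follows essentially the same route as the paper's: the forward direction is the Leibniz rule applied to $d\bigl(\Ad(f).dh\bigr)$ together with $d\Ad=(\ad\o\ka^r).\Ad$ pulled back along $f$ (so that $\ka^r$ becomes $\xi$), and the converse observes that $\Ad(f\i).\et$ is closed and invokes the Poincar\'e lemma on the simply connected $M$, with uniqueness from the normalization $h(x_0)=0$. One remark: your computation yields $d\et-[\xi,\et]^\g_\wedge=0$, which is consistent with the tangent-space description in \ref{nmb:3.1} and with \thetag{4} of Theorem \ref{nmb:3.7}, but is not literally the equation $d\et-\tfrac12[\xi,\et]^\g_\wedge=0$ asserted in the statement; with the paper's normalization $[\ph,\ps]^\g_\wedge(Y_1,Y_2)=[\ph(Y_1),\ps(Y_2)]-[\ph(Y_2),\ps(Y_1)]$ the factor $\tfrac12$ in the statement (and in the paper's own proof, which writes $(\ad\o\ka^r.Tf)\Ad(f)dh=\tfrac12[f^*\ka^r,\et]^\g_\wedge$) is a normalization slip, so you have in fact derived the correct version of the identity.
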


\begin{proof} If $h:M\to \g$ exists then $d^2h = 0$ so that 
\begin{align*}
d\et &= d\Ad.Tf\wedge dh + \Ad(f)d^2h = (\ad\o \ka^r.Tf)\Ad(f)dh + 0 
\\&
=  \tfrac12 [f^*\ka^r,\et]^\g_\wedge = \tfrac12 [\xi,\et]^\g_\wedge. 
\end{align*}
Conversely, by this computation  $d\et - \frac12[\xi,\et]^\g_\wedge =0$ implies that the $\g$-valued 1-form 
$\be:= \Ad(f\i)\et$ is closed. Since $M$ is simply connected, $\be = dh$ for $h\in C^{\infty}(M,\g)$ which is unique up to addition of a constant.
\end{proof}

\begin{theorem}\label{nmb:3.7} Let $G$ be a Lie group.
Then via right trivialization $(\ka^r,\pi_G):TG\to \g\x G$ 
the tangent group $TG$ is isomorphic to the semidirect product 
$\g\rtimes G$, where $G$ acts by $\Ad:G\to\on{Aut}(\g)$. So for $g,h\in G$ and $X,Y\in \g$ we have:
\begin{align*}
\mu_{\g\rtimes G}((X,g),(Y,h)) &= (X + \Ad(g)Y,gh)\,\tag{\nmb:{1}}\\
\nu_{\g\rtimes G}(X,g) &= (- \Ad(g\i)X,g\i)\,,\\
[(X_1,Y_1), (X_2,Y_2)]_{\g\rtimes\g} &= ([Y_1,X_2]-[Y_2,X_1], [Y_1,Y_2])\,,
\\
\Ad^{\g\rtimes G}_{(X,g)}(Y,Z) &= (\Ad(g)Y - [\Ad(g)Z,X],\Ad(g)Z)
\end{align*}
If $G$ is a regular Lie group, then so is $TG\cong \g\rtimes G$ 
and $T\evol^{\mathbb R}_G$ corresponds to $\evol^{\mathbb R}_{TG}$, via
\[
\xymatrix@C+0.8cm{
TC^\infty(\mathbb R,\g) \ar[d]_{T\evol^{\mathbb R}_G } \ar[r]^{\cong} & 
C^\infty(\mathbb R,\g\rtimes\g) \ar[ld]^{\evol^{\mathbb R}_{TG} } 
     \ar[d]^{\evol^{\mathbb R}_{\g\rtimes G}} \\
TG \ar[r]_{\cong} & \g\rtimes G\,.
}\tag{\nmb:{2}}
\]
In particular, for 
$(Y,X)\in C^\infty(\mathbb R,\g\x \g)=TC^\infty(\mathbb R,\g)$, where $X$ 
is the footpoint, we have 
\begin{gather*}
\evol^{\mathbb R}_{\g\rtimes G}(Y,X) = 
\Bigl(\Ad(\evol_G^{\mathbb R}(X))\int_0^1\Ad(\Evol^{\mathbb R}_G(X)(s)\i).Y(s)\,ds,\; 
     \evol^{\mathbb R}_G(X)\Bigr)\\
T_X\evol^{\mathbb R}_G.Y = 
T(\mu_{\evol^{\mathbb R}_G(X)}).\int_0^1\Ad(\Evol^{\mathbb R}_G(X)(s)\i).Y(s)\,ds,
\tag{\nmb:{3}}\\
T_X(\Evol^{\mathbb R}_G(\var)(t)).Y = T(\mu_{\Evol^{\mathbb R}_G(X)(t)}).
     \int_0^t\Ad(\Evol^{\mathbb R}_G(X)(s)\i).Y(s)\,ds.
\end{gather*}
For a pointed simply connected manifold $(M,x_0)$ we have 
\begin{align*}
T\Om^1_{\text{flat}}(M,\g) &= \{(\xi,\et)\in \Om^1(M,\g)^2: d\xi - \tfrac12 [\xi,\xi]^\g_\wedge = 0 \text{ and  }
d\et - [\xi,\et]^\g_\wedge =0\}
\\&
\cong  \{(\et,\xi)\in \Om^1(M,\g\rtimes \g): (d\et - [\xi,\et]^\g_\wedge, d\xi - \tfrac12 [\xi,\xi]^\g_\wedge) = (0,0) \}
\tag{\nmb:{4}}\\&
= \Om^1_{\text{flat}}(M,\g\rtimes\g)
\end{align*}
and the following diagram commutes:
\[
\xymatrix@C+0.8cm{
T\Om^1_{\text{flat}}(M,\g) \ar[r]_{T\Evol^{M}_G \quad} \ar[d]_{\cong} & 
TC^{\infty}((M,x_0),(G,e)) \ar[d]^{\cong} \\
\Om^1_{\text{flat}}(M,T\g) \ar[d]_{\cong} \ar[r]^{\Evol^M_{TG}\quad} & C^{\infty}((M,x_0),(TG,0_e))  \ar[d]^{\cong} \\
\Om^1_{\text{flat}}(\mathbb R,\g\rtimes\g) %\ar@/^1pc/[ru]^{\Evol^{M}_{TG}\quad } 
     \ar[r]^{\Evol^{M}_{\g\rtimes G} \qquad} 
 &  C^{\infty}((M,x_0),(\g\rtimes G,(0,e)))\,.
}\tag{\nmb:{5}}
\]
\end{theorem}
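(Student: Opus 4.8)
The plan is to establish Theorem \ref{nmb:3.7} in two stages: first the purely algebraic description of $TG\cong\g\rtimes G$ and its structure maps, then the two commuting-diagram statements, which will both follow by naturality of $\on{Evol}$ applied to the Lie group homomorphism/identification $TG\cong \g\rtimes G$.

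\textbf{Stage 1: the algebra of $TG\cong\g\rtimes G$.}
First I would verify the four displayed formulas \thetag{\nmb:{1}}. The map is $(\ka^r,\pi_G):TG\to\g\x G$, sending $T\mu^g.X\mapsto (X,g)$ for $X\in\g$. Applying the tangent of $\mu$ as recalled in \ref{nmb:2.1}, one gets $T\mu.(T\mu^g.X,\,T\mu^h.Y)=T\mu^{h}.T\mu^g.X+T\mu^g.T\mu^h.Y$ at the point $gh$; right-trivializing this at $gh$ and using $\ka^r(T\mu^h.T\mu^g.X)=X$ together with $T\mu^{gh\i}.T\mu^g.T\mu^h.Y = \Ad(g).Y$ gives the first line. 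The inversion formula follows from the formula for $T_a\nu$ in \ref{nmb:2.1} combined with $(\nu^*\ka^r)_g=-\Ad(g\i)\ka^r_g$ recorded there. The Lie bracket on $\g\rtimes\g$ and the adjoint action $\Ad^{\g\rtimes G}$ are then forced by differentiating the multiplication and conjugation formulas; I would just differentiate $\mu_{\g\rtimes G}$ twice (or cite the standard semidirect-product bracket formula) and differentiate $(X,g)\mapsto \mu_{\g\rtimes G}(\mu_{\g\rtimes G}((X,g),(Y,Z)),\nu_{\g\rtimes G}(X,g))$ to read off $\Ad^{\g\rtimes G}$. These are routine and I would present them compactly.

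\textbf{Stage 2: regularity of $TG$ and the $\mathbb R$-evolution formulas \thetag{\nmb:{2}}, \thetag{\nmb:{3}}.}
Regularity of $TG\cong\g\rtimes G$ when $G$ is regular is essentially the statement that $T\evol^{\mathbb R}_G$ is the evolution map of $TG$; I would prove the explicit formula for $\evol^{\mathbb R}_{\g\rtimes G}$ directly by solving the right-logarithmic ODE \thetag{1} of \ref{nmb:2.5} in the semidirect product. Writing the unknown curve as $(Z(t),g(t))$ with $\ka^r$-right-logarithmic derivative $(Y(t),X(t))$, the $G$-component ODE decouples and gives $g=\Evol^{\mathbb R}_G(X)$; the $\g$-component becomes, using \thetag{\nmb:{1}}, a linear ODE $Z'(t) = Y(t) + $ (correction terms from $\Ad$ and the bracket) whose solution by variation of constants is $Z(t)=\Ad(g(t))\int_0^t\Ad(g(s)\i)Y(s)\,ds$. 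Smoothness in $(Y,X)$ is inherited from smoothness of $\evol^{\mathbb R}_G$ and of $\Ad$, so $TG$ is regular. Comparing footpoints and fibers then yields \thetag{\nmb:{3}}: $T_X\evol^{\mathbb R}_G.Y$ is exactly the $\g$-component transported back by $T\mu_{g(1)}$, and the $\Evol$ version follows by the reparametrization/naturality identities of \ref{nmb:2.5}.

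\textbf{Stage 3: the manifold case, \thetag{\nmb:{4}} and the diagram \thetag{\nmb:{5}}.}
The identification \thetag{\nmb:{4}} is a direct comparison: applying the functor $\Om^1_{\text{flat}}(M,-)$ to the Lie algebra isomorphism $T\g\cong\g\rtimes\g$ (with bracket from \thetag{\nmb:{1}}), a 1-form with values in $\g\rtimes\g$ is a pair $(\et,\xi)$, and the flatness equation $d(\et,\xi)-\tfrac12[(\et,\xi),(\et,\xi)]^{\g\rtimes\g}_\wedge=0$ splits, via the bracket formula, exactly into $d\xi-\tfrac12[\xi,\xi]^\g_\wedge=0$ and $d\et-[\xi,\et]^\g_\wedge=0$, which is the definition of the kinematic tangent bundle $T\Om^1_{\text{flat}}(M,\g)$ recorded in \ref{nmb:3.1}. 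The left and right vertical isomorphisms of \thetag{\nmb:{5}} are this identification and the analogous one $TC^{\infty}(M,G)=C^{\infty}(M,TG)\cong C^{\infty}(M,\g\rtimes G)$ from \ref{nmb:3.2}; both identify tangent objects with $(\g\rtimes\g)$-valued, resp. $(\g\rtimes G)$-valued, data. The commutativity of the lower square is then \emph{precisely} the naturality of $\on{Evol}$ from Theorem \ref{nmb:3.3} applied to the smooth homomorphism $TG\cong\g\rtimes G$ (identity on the base), i.e. $\on{Evol}^M_{\g\rtimes G}=C^{\infty}(\on{id},\cong)\o\on{Evol}^M_{TG}\o\Om^1_{\text{flat}}(\on{id},\cong)$. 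For the upper square I would argue that $T\on{Evol}^M_G$ is determined by its right-logarithmic derivative in the $TG$-sense: applying $\delta^r_{TG}$ to $T\on{Evol}^M_G(\xi,\et)$ and using that $\delta^r$ intertwines with $T$ (this is the Lie-derivative remark in the proof of \ref{nmb:3.4}, now one level up) gives back $(\xi,\et)$ under \thetag{\nmb:{4}}, hence $T\on{Evol}^M_G=\on{Evol}^M_{TG}$ up to the identifications; uniqueness at $x_0$ (where both give $0_e\in TG$) pins it down.

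\textbf{Main obstacle.} The routine-looking but genuinely delicate point is the compatibility of the \emph{Fr\"olicher-smooth} structures throughout: that $T\Om^1_{\text{flat}}(M,\g)$ as the kinematic tangent bundle really coincides, as a Fr\"olicher space, with $\Om^1_{\text{flat}}(M,\g\rtimes\g)$, and likewise that $TC^{\infty}((M,x_0),(G,e))\cong C^{\infty}((M,x_0),(\g\rtimes G,(0,e)))$ is an isomorphism of Fr\"olicher spaces respecting the tangent-bundle structure described in \ref{nmb:3.2}. Establishing that $T\on{Evol}$ is smooth and equals $\on{Evol}_{TG}$ requires knowing that differentiation commutes with the parallel-transport construction $\on{Pt}^\xi$ of the proof of \ref{nmb:3.3} in the $\xi$-variable; here I would lean on the explicit formulas for $\on{Pt}^\xi$ and on the smooth dependence already proven there (item (5) in that proof), together with the fact that $\on{evol}^r_{TG}$ is smooth from Stage 2. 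The $\mathbb R$-case formulas \thetag{\nmb:{3}} can be quoted from \cite[38.10]{KM97} if one prefers, but I would include the short variation-of-constants derivation for self-containedness.
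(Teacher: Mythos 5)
Your proposal is correct in outline and coincides with the paper's proof on most points: Stage~1 is the same right-trivialization computation the paper carries out with right-invariant vector fields; the identification (4) is the same splitting of the Maurer--Cartan equation under the bracket of $\g\rtimes\g$; and the bottom square of (5) is, in both treatments, exactly naturality of $\Evol$ from Theorem~\ref{nmb:3.3} applied to the isomorphism $TG\cong\g\rtimes G$. For (2) and (3) the paper simply cites \cite[38.10]{KM97}, so your variation-of-constants derivation of $\evol^{\mathbb R}_{\g\rtimes G}$ is more self-contained than the paper but proves the same formula. The genuine divergence is the top square of (5). The paper differentiates the \emph{geometric} construction: it considers the curve of flat connections $\om^{\xi(t)}$ on $M\x G$, claims that $\p_t|_0 L^{\xi(t)}$ is the horizontal leaf $L^{(\et,\xi)}$ of $\om^{(\et,\xi)}$ on $M\x(\g\rtimes G)$, and verifies this by an explicit computation of $\om^{(\et,\xi)}_{(x,(X,g))}$. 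You instead characterize $T\Evol^M_G(\xi,\et)$ by its $TG$-valued right logarithmic derivative and invoke uniqueness at $x_0$. That route is legitimate and arguably cleaner, but be aware that the assertion you lean on --- that ``$\de^r$ intertwines with $T$'', i.e.\ that $\de^r_{TG}\bigl(\p_t|_0 f_t\bigr)$ corresponds to $\bigl(\p_t|_0\,\de^r f_t,\ \de^r f_0\bigr)$ under (4) --- is not a one-line consequence of the remark in the proof of \ref{nmb:3.4} (which only computes $T_e\de^r.X=dX$, the linearization at the constant map $e$). It requires the explicit formula for $\ka^{r}_{TG}$ in the trivialization $TG\cong\g\rtimes G$ (including the bracket correction term $-\Ad(g\i)[\xi(Y),X]$), which is precisely the computation the paper performs for $\om^{(\et,\xi)}$; so this lemma must be proved, not cited. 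With that computation supplied, your argument closes, and the two proofs become two phrasings of the same differentiation of the defining equation $\de^r\o\Evol=\on{Id}$.
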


\begin{demo}{Proof} The first part may be found in  \cite[38.10]{KM97}. We repeat the derivation of the formulas. By \ref{nmb:2.1},  for $g,h\in G$ and $X,Y\in \g$, we have
\begin{align*}
T_{(g,h)}\mu.(R_X(g)&,R_Y(h)) = T(\mu^h).R_X(g) + T(\mu_g).R_Y(h) \\
&= T(\mu^h).T(\mu^g).X + T(\mu_g).T(\mu^h).Y 
= R_X(gh) + R_{\Ad(g)Y}(gh), \\
T_{g}\nu.R_X(g) &= - T(\mu^{g\i}).T(\mu_{g\i}).T(\mu^g).X 
= - R_{\Ad(g\i)X}(g\i),
\end{align*}
so that $\mu_{TG}$ and $\nu_{TG}$, and the Lie bracket, after right trivialization,  are given by
\begin{align*}
\mu^{\g\rtimes G}((X,g),(Y,h)) &= (X + \Ad(g)Y,gh)\tag{\nmb:{6}}\\
T\mu^{\g\rtimes G}_{(X,g)}(Y',h') &= (\Ad(g)Y',T\mu_gh'), \quad h'\in T_hG\\
\nu^{\g\rtimes G}(X,g) &= (- \Ad(g\i)X,g\i),\\
(X,g).(Y,h).(X,g)\i &= (X + \Ad(g)Y,gh).(-\Ad(g\i)X,g\i)
\\&
= (X + \Ad(g)Y - \Ad(ghg\i)X, ghg\i)
\\
\Ad^{\g\rtimes G}(X,g)(Y',h') &= (\Ad(g)Y' - \Ad(e)[\Ad(g)h',X],\Ad(g)h'])
\\
\ad^{\g\rtimes G}(X',g')(Y',h') &= ([g',Y'] - [[g',h'],0]) - [h',X'],[g',h']
\\
[(X_1,Y_1), (X_2,Y_2)]_{\g\rtimes\g} &= ([Y_1,X_2]-[Y_2,X_1], [Y_1,Y_2])\,.
\end{align*}
That diagram \thetag{\nmb|{2}} commutes and equations \thetag{\nmb|{3}} hold, has been proven in \cite[38.10]{KM97}.
Now we prove that diagram \thetag{\nmb|{5}} commutes. For the bottom square this follows from theorem \ref{nmb:3.3}.
We consider a curve $t\mapsto \xi(t)\in \Om^1_{\text{flat}}(M,\g)$ which is smooth into $\Om^1(M,\g)$. Then 
$\p_t|_0 \xi(t) =: \et\in T_{\xi(0)}\Om^1_{\text{flat}}(M,\g)$ so that $d\et - [\xi(0),\et]^{\g}_\wedge = 0$. 
As in the beginning of the proof of theorem \ref{nmb:3.3} we now consider the smooth curve of flat principal connections 
\[
t\mapsto \om^{\xi(t)} = \ka^l - (\Ad\o \nu).\xi(t) 
\]
on $M\x G$, and we let $L^{\xi(t)}\subset M\x G$ be the horizontal leaf through $(x_0,e)$ for the connection 
$\om^{\xi(t)}$. Recall that 
$$(\Id_M,\Evol^M_G(\xi(t))) = (\pr_1|L^{\xi(t)})\i:M\to L^{\xi(t)}\subset M\x G$$
is the horizontal lift.  
Likewise, writing $\xi=\xi(0)$, 
$$(\Id_M,\Evol^M_{\g\rtimes G}(\et,\xi) = (\pr_1|L^{(\et,\xi)})\i:M\to L^{(\et,\xi)}\subset M\x (\g\rtimes G)$$
where $L^{(\et,\xi)}$ is the horizontal leaf through $(x_0,0,e)$ of the principal connection 
$$\om^{(\et,\xi)}
= \ka^{l,\g\rtimes G} - (\Ad^{\g\rtimes G}\o \nu^{\g\rtimes G})(\et,\xi)\,.$$

\noindent{\bf Claim.} $\p_t|_0 L^{\xi(t)}$ is the horizontal leaf $L^{(\et,\xi)}$ of the flat principal connection $\om^{(\et,\xi(0))}$ on the principal bundle $M\x (\g\rtimes G)\to M$. This is sufficient to finish the proof.

$T_{(x_0,e)}L^{\xi(t)}$ consists of all 
$(Y,X(t))\in T_{x_0}M\x \g$ such that 
$\om^{\xi(t)}_{(x_0,e)}(Y,X(t)) = X(t) - \xi(t)(Y) = 0$; since $L^{\xi(t)}$ is a horizontal leaf we may fix $Y$. Then 
$$\p_t|_0 \om^{\xi(t)}_{(x_0,e)}(Y,X(t)) = X' - \et(Y) = 0\,.$$
On the other hand 
$\om^{(\et,\xi)}_{(x_0,(0,e))}(Y,(Z,X)) = (Z-\et(Y),X-\xi(Y))$
since for $(X,g)\in \g\rtimes G$ and $(X',g')\in T_X\g\times T_gG$ we have 
\begin{align*}
&\om^{(\et,\xi)}_{(x,(X,g))}\big(Y,(X,X';g,g')\big) 
= (T\mu^{\g\rtimes G}_{(X,g)\i})(X',g') - \Ad^{\g\rtimes G}((X,g)\i)(\et(Y),\xi(Y))
\\&
= (T\mu^{ \g\rtimes G}_{(-\Ad(g\i)X,g\i)})(X',g') - \Ad^{\g\rtimes G}(-\Ad(g\i)X,g\i)(\et(Y),\xi(Y))
\\&
= (\Ad(g\i)X',T\mu_{g\i}g') 
\\&\qquad
- (\Ad(g\i)\et(Y)-[\Ad(g\i)\xi(Y),-\Ad(g\i)X],\Ad(g\i)\xi(Y))
\\&
= \big(\Ad(g\i)X' - \Ad(g\i)\et(Y)- \Ad(g\i)[\xi(Y),X],T\mu_{g\i}g'-\Ad(g\i)\xi(Y)\big). \!\qed
\end{align*}
\end{demo}

%\bibliographystyle{abbrv}
%\bibliography{../../ref/ref,../../ref/biblio,../../ref/articles}
\def\cprime{$'$}

\end{document}